\newcommand{\lcm}{\mathop{\mathrm{lcm}}}
\newtheorem{theorem}{Theorem}
\newtheorem{prop}[theorem]{Proposition}
\newenvironment{proof}{\prepf\rm}{\endprepf}
\begin{document}

\title{Enhanced power graphs of groups are weakly perfect}
\author{Peter J. Cameron\footnote{School of Mathematics and Statistics,
University of St Andrews, UK; email \texttt{pjc20@st-andrews.ac.uk}}\ \ and
Veronica Phan\footnote{37 Street 2, Ward 6, District 8, Ho Chi Minh City,
Vietnam; email \texttt{kyubivulpes@gmail.com}}}
\date{}
\maketitle

\begin{abstract}
A graph is weakly perfect if its clique number and chromatic number are equal.
We show that the enhanced power graph of a finite group $G$ is weakly perfect:
its clique number and chromatic number are equal to the maximum order of an
element of $G$. The proof requires a combinatorial lemma. We give some remarks
about related graphs.
\end{abstract}

\section{Introduction}

The \emph{directed power graph} of a finite group $G$, defined in~\cite{kq},
has the elements of $G$ as vertices, with an arc from $x$ to $y$ if $y=x^n$ for
some integer $n$. This relation is reflexive and transitive, hence is a
\emph{partial preorder}. The (undirected) \emph{power graph}, defined in
\cite{cgs}, is obtained by ignoring directions: that is, $x$ and $y$ are
joined if one is a power of the other. This graph is thus the comparability
graph of a partial preorder; a small extension of Dilworth's theorem shows
that it is \emph{perfect}, that is, every induced subgraph has clique number
equal to chromatic number.

Both these graphs were first defined for semigroups, but most work on them
has concerned groups.

According to the \emph{strong perfect graph theorem}~\cite{pgt}, a graph is
perfect if and only if it has no induced subgraph which is a cycle of odd
length greater than~$3$ or the complement of one.

The \emph{enhanced power graph} of $G$, defined in \cite{aetal}, again has
vertex set $G$, with $x$ and $y$ joined if there is an element $z$ such
that both $x$ and $y$ are powers of $z$. (Equivalently, $x$ and $y$ are joined
if and only if the group they generate is cyclic.) It is shown in
\cite{c:survey} that enhanced power graphs of finite groups are
\emph{universal}, that is, every finite graph occurs as an induced subgraph
of such a graph. Thus, these graphs are not in general perfect.

Our purpose here is to show that enhanced power graphs are \emph{weakly
perfect}, that is, they have chromatic number equal to clique number. Indeed
our result is not restricted to finite groups, but applies to groups in
which all elements have finite and bounded order.

\begin{theorem}
Let $G$ be a finite group, or a torsion group of bounded exponent. Then the
clique number and the chromatic number of $G$ are both equal to the maximal
order of an element of $G$.
\label{t:main}
\end{theorem}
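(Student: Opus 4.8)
The plan is to prove the two equalities separately. Writing $N$ for the maximal order of an element, the bound $\omega(G)\ge N$ is immediate, since the elements of a cyclic subgroup of order $N$ are pairwise joined; likewise $\chi(G)\ge\omega(G)\ge N$. So the work is in the upper bounds $\omega(G)\le N$ and $\chi(G)\le N$.

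For $\omega(G)\le N$ I would first show that every clique $C$ of the enhanced power graph is contained in a cyclic subgroup. Any two elements of $C$ generate a cyclic, hence abelian, subgroup, so $C$ consists of pairwise commuting elements of finite order; for $x\in C$ let $x_p$ be the $p$-part of $x$, which lies in $\langle x\rangle$. If $x,y\in C$ then $\langle x,y\rangle$ is cyclic, so $x_p$ and $y_p$ lie in its (cyclic) Sylow $p$-subgroup, and subgroups of a cyclic $p$-group are totally ordered; hence the subgroups $\langle x_p\rangle$, $x\in C$, form a chain. This chain has bounded order (by the exponent, in the torsion case), so it has a largest member $\langle u_p\rangle$; in particular $C$ is finite. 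Each $u_p$ is a power of an element of $C$, so the $u_p$ over the finitely many relevant primes commute, and $g:=\prod_p u_p$ has order $\prod_p|u_p|=\lcm\{|x|:x\in C\}$ with $\langle g\rangle=\prod_p\langle u_p\rangle\supseteq C$. Thus $|C|\le|g|\le N$, so $\omega(G)=N$.

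For $\chi(G)\le N$ I would construct a proper $N$-colouring. Fix a generator $g_H$ of each cyclic subgroup $H$ of $G$. An element $x$ of order $d$ lies in a unique cyclic subgroup $\langle x\rangle$ of order $d$, so $x=g_{\langle x\rangle}^{j}$ for a unique $j\in(\mathbb Z/d\mathbb Z)^\times$; colour $x$ by a quantity depending only on the pair $(d,j)$. Let $D$ be the (finite) set of element orders of $G$, so $N=\max D$. If $x,y$ are joined with orders $d,e$ then $\langle x,y\rangle$ is cyclic of order $\lcm(d,e)$, so $\lcm(d,e)\in D$, and if $d=e$ then $\langle x\rangle=\langle y\rangle$ and the discrete logs differ. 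Hence a colouring $c(x)=f(d,j)$ is proper provided $f$ is injective on each set $\{d\}\times(\mathbb Z/d\mathbb Z)^\times$ and the colour sets $S_d:=\{f(d,j):j\}$ and $S_e$ are disjoint whenever $d\ne e$ and $\lcm(d,e)\in D$. So the theorem reduces to a combinatorial lemma: for every finite divisor-closed set $D$ of positive integers with maximum $N$, there exist $S_d\subseteq\{1,\dots,N\}$ with $|S_d|=\phi(d)$ for all $d\in D$ and $S_d\cap S_e=\emptyset$ whenever $d\ne e$ and $\lcm(d,e)\in D$. Equivalently, the graph on $D$ with an edge $d$–$e$ exactly when $\lcm(d,e)\in D$, weighted by $\phi$, has weighted chromatic number $N$; its weighted clique number is also $N$, since the divisors of $N$ form a clique of weight $\sum_{d\mid N}\phi(d)=N$, while any clique $\{d_i\}$ consists of pairwise lcm-compatible integers, whence $\sum_i\phi(d_i)\le\max_{i<j}\lcm(d_i,d_j)\le N$.

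I would prove the lemma by recursion on $D$: process its elements in increasing order, giving each $d$ the $\phi(d)$ smallest colours not yet used on a previously processed element lcm-compatible with $d$. The proper divisors of $d$ are pairwise lcm-compatible and have already consumed exactly $d-\phi(d)$ distinct colours, so the point is to show that the remaining forbidden colours — those used on smaller, incomparable, but lcm-compatible orders — never push the total above $N-\phi(d)$; this amounts to showing that the greedy colour sets stay only as disjoint as lcm-compatibility forces them to be. Making this precise is the step I expect to be the main obstacle; everything else is bookkeeping. Finally, none of this used finiteness of $G$, only that $D$ and the cliques are finite — both guaranteed for torsion groups of bounded exponent, since $D$ consists of divisors of the exponent. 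Combining the inequalities yields $\omega(G)=\chi(G)=N$.
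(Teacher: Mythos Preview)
Your reduction is exactly right and matches the paper's: a clique lies in a cyclic subgroup (your Sylow argument is a self-contained proof of the lemma the paper simply cites from \cite{aetal}), and the colouring problem reduces to finding sets $S_d\subseteq\{1,\dots,N\}$ with $|S_d|=\phi(d)$, pairwise disjoint whenever $\lcm(d,e)\le N$. But this combinatorial lemma \emph{is} the heart of the theorem, and you have not proved it. You propose processing the $d$'s greedily in increasing order and yourself flag the crux---showing that the forbidden set at stage $d$ has size at most $N-\phi(d)$---as ``the main obstacle'', without resolving it. The proper divisors of $d$ account for exactly $d-\phi(d)$ forbidden colours; the remaining forbidden colours come from incomparable $e<d$ with $\lcm(d,e)\le N$, and you give no mechanism to control how their colour sets overlap or accumulate. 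So as written the proposal has a genuine gap at the one nontrivial step.

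The paper avoids greedy entirely via an explicit construction: set $A_q=\{\lceil Np/q\rceil:1\le p\le q,\ \gcd(p,q)=1\}$. If two reduced fractions $p/q\ne p'/q'$ with denominators at most $N$ satisfy $\lceil Np/q\rceil=\lceil Np'/q'\rceil$, then $|p/q-p'/q'|<1/N$; but this difference is a nonzero rational with denominator dividing $\lcm(q,q')$, hence at least $1/\lcm(q,q')$, forcing $\lcm(q,q')>N$. This single inequality yields both $|A_q|=\phi(q)$ (injectivity for fixed $q$) and the required disjointness $A_q\cap A_{q'}=\emptyset$ when $\lcm(q,q')\le N$. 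Your framework is correct and your colouring-by-discrete-logarithm is the same device the paper uses; what is missing is precisely this construction, or a completed argument that greedy works.
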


The result for clique number is known, and the proof is straightforward;
the result for chromatic number requires the following purely combinatorial
result. We note that the proof is constructive, so gives an easy algorithm
for colouring the enhanced power graph.

\begin{theorem}
For every natural number $n$, there exist subsets $A_1,A_2,\ldots,A_n$ of
$\{1,2,\ldots,n\}$ with the properties
\begin{itemize}
\item $|A_q|=\phi(q)$ for $q\in\{1,\ldots,n\}$, where $\phi$ is Euler's
totient;
\item if $\lcm(q,q')\le n$, then $A_q\cap A_{q'}=\emptyset$, where $\lcm$
denotes the least common multiple.
\end{itemize}
\label{t:lem}
\end{theorem}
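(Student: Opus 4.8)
The plan is to give an explicit construction. The starting observation is that the hypothesis $\lcm(q,q')\le n$ is equivalent to saying that some integer $m\le n$ is divisible by both $q$ and $q'$. Thinking of a denominator $q\le n$ via the reduced fractions $a/q$ (and of $A_q$ as the set of colours used on fractions of exact denominator $q$), we are really being asked to colour all fractions $j/m$ with $m\le n$, using colours from $\{1,\dots,n\}$, so that two fractions lying in a common scale $\tfrac1m\mathbf{Z}$ with $m\le n$ receive different colours. A single ``rescale by $n$ and round down'' map on $\mathbf{Q}/\mathbf{Z}$ does this.

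Concretely, I would define, for $q\in\{1,\dots,n\}$,
\[
A_q=\bigl\{\,\lfloor na/q\rfloor+1 \ :\ 0\le a<q,\ \gcd(a,q)=1\,\bigr\},
\]
so in particular $A_1=\{1\}$, and for $q\ge 2$ the index $a$ ranges over the $\phi(q)$ residues in $\{1,\dots,q-1\}$ coprime to $q$. Everything then follows from one elementary remark: for integers $n\ge m\ge 1$, the function $j\mapsto\lfloor nj/m\rfloor$ is strictly increasing on $\{0,1,\dots,m-1\}$, since $\lfloor n(j+1)/m\rfloor-\lfloor nj/m\rfloor\ge\lfloor n/m\rfloor\ge 1$; in particular it is injective there and takes values in $\{0,1,\dots,n-1\}$.

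Three checks remain. First, $A_q\subseteq\{1,\dots,n\}$: from $0\le a\le q-1<q\le n$ we get $\lfloor na/q\rfloor\in\{0,\dots,n-1\}$. Second, $|A_q|=\phi(q)$: taking $m=q$ in the remark, $a\mapsto\lfloor na/q\rfloor$ is injective on $\{0,\dots,q-1\}$, hence on the $\phi(q)$ coprime residues. Third, disjointness: suppose $q\ne q'$, $m:=\lcm(q,q')\le n$, and $\lfloor na/q\rfloor=\lfloor nb/q'\rfloor$ with $\gcd(a,q)=\gcd(b,q')=1$, $0\le a<q$, $0\le b<q'$. Put $j=a\cdot(m/q)$ and $j'=b\cdot(m/q')$; since $q\mid m$ and $q'\mid m$ these are integers in $\{0,\dots,m-1\}$, and $na/q=nj/m$, $nb/q'=nj'/m$. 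Applying the remark with this $m\le n$ gives $j=j'$, i.e.\ $a/q=b/q'$ as reduced fractions, hence $q=q'$, a contradiction. This proves Theorem~\ref{t:lem}, and the construction is manifestly explicit (and easily computed).

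I do not expect any genuine obstacle in the write-up: once the map $a/q\mapsto\lfloor na/q\rfloor$ is chosen, all three verifications reduce to the single monotonicity remark above. The only real content is the recognition that the constraint in Theorem~\ref{t:lem} is controlled entirely by common multiples $\le n$, so that a uniform colouring of $\mathbf{Q}/\mathbf{Z}$ handles all denominators at once; if pressed for where the difficulty hides, it is in finding this viewpoint rather than in any calculation.
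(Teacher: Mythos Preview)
Your proof is correct and is essentially the paper's own construction: the paper sets $A_q=\{\lceil np/q\rceil : p/q\in(0,1],\ \gcd(p,q)=1\}$, which is the same ``scale by $n$ and round'' map on reduced fractions as your $a/q\mapsto\lfloor na/q\rfloor+1$ on $[0,1)$, up to a harmless choice of rounding convention. The only difference is in the verification of disjointness---the paper argues directly that equal images force $|p/q-p'/q'|<1/n$ while $|p/q-p'/q'|\ge 1/\lcm(q,q')$, whereas you pass to the common denominator $m=\lcm(q,q')$ and invoke monotonicity of $j\mapsto\lfloor nj/m\rfloor$; these are the same observation in different clothing.
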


These theorems will be proved in the next two sections. In the final section
we give some concluding remarks.

Many further properties of power graphs and enhanced power graphs can be 
found in \cite{acst} and \cite{zbm}.

\section{Proof of Theorem~\ref{t:lem}}
Let $D$ be the set of fractions $p/q$ (in their lowest terms) in $(0,1]$,
for $1\le q\le n$. We define a function$f:D\to\{1,2,\ldots,n\}$ by the rule
\[f(p/q)=\lceil np/q\rceil.\]
The key observation is the following:
\begin{quote}
If $p/q\ne p'/q'$ and $f(p/q)=f(p'/q')$, then $\lcm(q,q')>n$.
\end{quote}
For, if $f(p/q)=f('p/q')$, then there exists $m$ such that
\[m-1<np/q,np'/q'\le m.\]
Thus $|p/q-p'/q'|<1/n$. On the other hand, $|p/q-p'/q'|$ is a rational
number whose numerator is at least $1$ (since $p/q\ne p'/q'$), and the
denominator is $\lcm(q,q')$. So we have
\[\frac{1}{n}>\left|\frac{p}{q}-\frac{p'}{q'}\right|\ge\frac{1}{\lcm(q,q')},\]
and so $\lcm(q,q')>n$, as required.

Now we let $D_q$ be the set of fractions in $D$ with denominator $q$, so that
$|D_q|=\phi(q)$, and let $A_q=f(D_q)\subseteq\{1,\ldots,n\}$. By our key
observation we see that
\begin{itemize}
\item the restriction of $f$ to $D_q$ is injective, so $|A_q|=\phi(q)$;
\item if $q\ne q'$ and $\lcm(q,q')\le n$, then $A_q\cap A_{q'}=\emptyset$.
\end{itemize}
So the theorem is proved.

\medskip

For example, here are the sets generated for $n=12$ by the above procedure.
\[\begin{array}{l}
A_1=\{12\}, A_2=\{6\}, A_3=\{4, 8\}, A_4=\{3, 9\}, \\
A_5=\{ 3, 5, 8, 10\}, A_6=\{ 2, 10\},  A_7= \{2, 4, 6, 7, 9, 11\}, \\
A_8=\{2, 5, 8, 11\}, A_9= \{2, 3, 6, 7, 10, 11\}, A_{10}= \{2, 4, 9, 11\}, \\
A_{11}=\{2, 3, 4, 5, 6, 7, 8, 9, 10, 11\}, A_{12}=\{1, 5, 7, 11\}.
\end{array}\]

\section{Proof of Theorem~\ref{t:main}}

We begin with the observation that if a finite set of elements in a group has
the property that any two of its elements generate a cyclic group, then the
whole set generates a cyclic group. A proof can be found in 
\cite[Lemma 32]{aetal}. It follows that a maximal clique in the enhanced power
graph is a maximal cyclic subgroup of $G$, and the clique number is equal to
the order of the largest cyclic subgroup, say $n$.

In order to find a colouring with $n$ colours, we take $\{1,2,\ldots,n\}$ to
be the set of colours, with the subsets $A_q$ given by Theorem~\ref{t:lem}.
We will use the set $A_q$ to colour elements of order $q$. If two elements of
order $q$ are joined, they lie in the same cyclic subgroup of order~$q$;
this subgroup has $\phi(q)$ generators, so we have enough colours to give them
all different colours. Other elements of order $q$ are not joined to these
ones, so we may re-use the same set of colours for them. Now, if two elements
of different orders $q$ and $q'$ are joined, they generate a cyclic group of
order $\lcm(q,q')$, which is at most $n$; so the sets of colours assigned to
them are disjoint. Thus, we obtain a proper colouring.

\section{Further remarks}

Our combinatorial lemma can deal with any set of element orders, as long as
the largest order $n$ is given. Now there are groups in which the set of
element orders is $\{1,\ldots,n\}$ for some $n$. (For example, the orders of
elements in the alternating group $A_7$ are $1,2,3,4,5,6,7$.) But, as we show
below, this can only occur for finitely many values of $n$. So, at first
glance, it seems we may be able to simplify the argument for most groups by
using the fact that not all orders occur. We have not attempted to do so, and
indeed it seems unlikely that any simplification can be obtained.

\begin{prop}
There are only finitely many values of $n$ for which there exists a finite
group in which the set of element orders is $\{1,\ldots,n\}$.
\end{prop}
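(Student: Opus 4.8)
The plan is to derive a contradiction from the assumption that infinitely many such $n$ occur. Suppose $G$ is a finite group whose set of element orders is exactly $\{1,2,\ldots,n\}$. The crucial arithmetic fact is that if $G$ has an element of order $n$, then every prime $p\le n$ divides $|G|$ (indeed $n$ itself is an element order, so $n\mid \exp(G)$, but more is needed). First I would observe that for every prime power $q\le n$, $G$ has an element of order $q$, hence $q\mid|G|$; in particular every prime $p\le n$ divides $|G|$. By the prime number theorem (or the weaker Chebyshev bound), $\prod_{p\le n}p = e^{(1+o(1))n}$, so $|G|\ge e^{(1+o(1))n}$.

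Next I would bound $|G|$ from above. The key constraint is that $G$ has \emph{no} element of order $n+1$, and more usefully, a Sylow $p$-subgroup cannot be too large: if $p^k\le n < p^{k+1}$, then $G$ has an element of order $p^k$ but none of order $p^{k+1}$, which by itself only says the exponent of a Sylow $p$-subgroup is $p^k$, not that the subgroup is small. So a purely order-theoretic bound on $|G|$ is not immediate, and this is where the main obstacle lies. The route I would take instead uses a theorem on groups all of whose element orders are bounded, or better, exploits the element of order $n$ directly: let $x$ have order $n$, and consider the action of $G$ on the cosets or on itself; the cyclic group $\langle x\rangle$ has index $|G|/n$. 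If this index were large, one could try to produce, via a counting or Sylow-normalizer argument, an element whose order exceeds $n$ — for instance, if some prime $p$ does not divide $n$ but a Sylow $p$-subgroup is nontrivial and centralizes part of $\langle x\rangle$, we get an element of order $p\cdot d$ for some $d\mid n$, which may exceed $n$.

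Concretely, I would argue as follows. Let $n\ge 2$ and pick a prime $p$ with $n/2 < p \le n$ (which exists by Bertrand's postulate). Then $p\mid |G|$, and since $2p > n$, no element has order divisible by $p^2$ and no element has order $pm$ with $m\ge 2$; hence every element of order divisible by $p$ has order exactly $p$. Thus a Sylow $p$-subgroup $P$ is cyclic of order $p$, its normalizer $N_G(P)$ has $C_G(P)=P\times(\text{$p'$-part})$, and any element of $C_G(P)$ of order coprime to $p$ would combine with a generator of $P$ to give an element of order $p\cdot t>n$ unless $t=1$. So $C_G(P)=P$, and therefore $N_G(P)/C_G(P)$ embeds in $\mathrm{Aut}(P)\cong C_{p-1}$; combined with $C_G(P)=P$ this forces $|N_G(P)|\mid p(p-1)$. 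By Burnside's normal $p$-complement theorem (applicable since $P$ is abelian and $N_G(P)=C_G(P)$ would be needed — here $N_G(P)$ may be larger, so instead I invoke the transfer/Frobenius machinery), $G$ has a normal $p$-complement $K$ with $G=K\rtimes P$. But then the primes dividing $|K|=|G|/p$ still include all primes $\le n/2$, so by induction on the structure, and iterating over the primes in $(n/2,n]$, one peels off a cyclic Sylow factor for each such prime; the number of such primes grows like $n/(2\log n)$, forcing $|G|$ to have many prime factors while the complement structure becomes increasingly constrained. Pushing this recursion, one finds that for large $n$ no such $G$ can exist, because the normal $p$-complement for a prime $p$ close to $n$ forbids elements of order $2p, 3p,\ldots$ yet the complement must still contain elements of all orders up to $n/2$ whose interaction with $P$ is too rigid. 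I expect the main obstacle to be making this peeling-off argument uniform and turning ``sufficiently large $n$'' into an effective statement; handling the small cases (which genuinely occur, e.g.\ $n=7$ for $A_7$) is then a finite check.
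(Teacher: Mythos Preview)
Your proposal has the right opening move but a genuine gap where the argument should close. You correctly isolate the primes $p\in(n/2,n]$ and observe that in such a group every element of order divisible by $p$ must have order exactly $p$; equivalently, each such $p$ is an isolated vertex in the Gruenberg--Kegel (prime) graph of $G$. The paper's proof stops right there and invokes a deep structural fact you do not use: for \emph{any} finite group the prime graph has at most six connected components (Gruenberg--Kegel, Williams, Kondrat'ev). Since the primes $\le n/2$ contribute at least one further component, at most five primes can lie in $(n/2,n]$; Erd\H{o}s's strengthening of Bertrand then finishes the proof.

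By contrast, your attempted workaround via normal $p$-complements does not close. Burnside's transfer theorem needs $N_G(P)=C_G(P)$, and you yourself note that here one only gets $C_G(P)=P$ with $N_G(P)/P\hookrightarrow C_{p-1}$, so the hypothesis can fail. Even granting a normal $p$-complement for every prime in $(n/2,n]$, your ``peeling off'' step never produces a contradiction: a semidirect product $K\rtimes P_1\rtimes\cdots\rtimes P_r$ with each $P_i$ cyclic of prime order and $K$ realising all orders up to $n/2$ is not visibly impossible, and the phrase ``interaction with $P$ is too rigid'' is not an argument. The missing idea is precisely the global bound on the number of components of the prime graph; without it (or something of comparable strength about groups with many isolated primes), the recursion you sketch has no terminating contradiction.
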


\begin{proof}
We use the \emph{Gruenberg--Kegel graph} of a group $G$ (sometimes called the
\emph{prime graph}): the vertices are the prime divisors of $|G|$, with
vertices $p$ and $q$ joined if $G$ contains an element of order $pq$. Gruenberg
and Kegel described this graph in an unpublished manuscript on the 
decomposition of the augmentation ideal of the group ring; their main
theorem, a description of the groups whose Gruenberg--Kegel graph is
disconnected, was published by Gruenberg's student Williams~\cite{williams}
and refined by later authors, notably Kondrat'ev~\cite{kondratev}.

We will use the fact that the number of connected components of this graph is
at most $6$, for any finite group.

Now suppose that $G$ is a group in which the element orders are
$\{1,2,\ldots,n\}$. If $p$ is a prime in the interval $(n/2,n]$, then $p$ is
an isolated vertex in the Gruenberg--Kegel graph of $G$; so there can be at
most five such primes. But, in a strengthening of Bertrand's postulate,
Erd\H{o}s~\cite{erdos} showed that the number of primes in this interval tends
to $\infty$ with $n$. The result is proved.
\end{proof}

The weak perfect graph theorem asserts that a graph is perfect if and only if
its complement is perfect. This does not hold for weakly perfect graphs.
However, we note that Jitender Kumar Parveen has recently posted on the arXiv
a paper showing (among other things) that the complement of the enhanced
power graph of a finite group is weakly perfect~\cite{parveen}.

\medskip

A related graph is the difference of the power graph and enhanced power graph
of the group $G$, which we will denote by $\Delta(G)$: $x$ and $y$ are joined
in this graph if they are joined in the enhanced power graph but not in the
power graph.

For a group $G$, let $\Omega(G)$ denote the set of orders of elements of $G$.
For a positive integer $n$, let $\alpha(n)$ denote the size of the largest
antichain in the lattice of divisors of $n$. De Bruijn \emph{et al.}~\cite{db}
showed that, if $n$ has $m$ prime factors (counted with multiplicity), then
a maximum-size antichain consists of all divisors with $m/2$ prime factors
if $m$ is even, and either all divisors with $\lfloor m/2\rfloor$ prime
factors or all with $\lceil m/2\rceil$ prime factors if $m$ is odd. (This is
a generalisation of Sperner's lemma.)

\begin{prop}
For a finite group $G$, the clique number of $\Delta(G)$ is equal to
$\max\{\alpha(n):n\in\Omega(G)\}$.
\end{prop}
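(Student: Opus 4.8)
The plan is to reduce the problem to a single cyclic subgroup. First I would observe that a clique $K$ of $\Delta(G)$ is in particular a clique of the enhanced power graph, so any two of its elements generate a cyclic group; by the lemma quoted at the beginning of the proof of Theorem~\ref{t:main} (namely \cite[Lemma~32]{aetal}), the whole of $K$ then generates a cyclic subgroup $C$ of $G$. Since $|C|$ equals the order of any generator of $C$, we have $|C|=n$ for some $n\in\Omega(G)$, so it is enough to analyse cliques of $\Delta(G)$ lying inside a cyclic group.

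Next I would fix a cyclic group $C$ of order $n$ and describe $\Delta(G)$ restricted to $C$. For $x,y\in C$, the element $x$ is a power of $y$ exactly when $\langle x\rangle\subseteq\langle y\rangle$; because a cyclic group has a unique subgroup of each order dividing $n$, this holds precisely when $\mathrm{ord}(x)\mid\mathrm{ord}(y)$. Since any two elements of $C$ are adjacent in the enhanced power graph, two distinct elements $x,y\in C$ are adjacent in $\Delta(G)$ if and only if neither of $\mathrm{ord}(x)$, $\mathrm{ord}(y)$ divides the other. Hence if $K\subseteq C$ is a clique of $\Delta(G)$, the map $x\mapsto\mathrm{ord}(x)$ is injective on $K$ (two elements of the same order generate the same subgroup, so are comparable in the power graph) and its image is an antichain in the divisor lattice of $n$; therefore $|K|\le\alpha(n)$. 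Combined with the first step, this gives $\omega(\Delta(G))\le\max\{\alpha(n):n\in\Omega(G)\}$.

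For the reverse inequality I would choose $n\in\Omega(G)$ with $\alpha(n)$ as large as possible, a cyclic subgroup $C\le G$ of order $n$, a maximum antichain $\{d_1,\ldots,d_{\alpha(n)}\}$ in the divisor lattice of $n$, and for each $i$ a generator $x_i$ of the unique subgroup of $C$ of order $d_i$. Any two of the $x_i$ lie in $C$, hence generate a cyclic group, while for $i\ne j$ neither $d_i$ nor $d_j$ divides the other, so $x_i$ and $x_j$ are adjacent in $\Delta(G)$; thus $\{x_1,\ldots,x_{\alpha(n)}\}$ is a clique of size $\alpha(n)$, and equality follows.

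There is no real obstacle in this argument; the only points needing a little care are the observation that inside a cyclic group the relation of one element being a power of another is detected purely by divisibility of orders (so that the edges of $\Delta$ within a cyclic subgroup are governed by the divisor lattice), and the use of \cite[Lemma~32]{aetal} to guarantee that no clique of $\Delta(G)$ can stretch across more than one cyclic subgroup.
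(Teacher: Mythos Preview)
Your argument is correct and follows the same line as the paper's proof: a clique of $\Delta(G)$ is a clique of the enhanced power graph and hence lies in a cyclic subgroup, and inside a cyclic group adjacency in $\Delta$ is exactly incomparability of orders in the divisor lattice. In fact your write-up is more complete than the paper's, which records only the upper bound explicitly and leaves the reverse inequality (the construction of a clique of size $\alpha(n)$ from a maximum antichain of divisors) to the reader.
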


\begin{proof}
A clique $S$ in $\Delta(G)$ is a clique in the enhanced power graph, and so
is contained in a cyclic group $C$. Now a cyclic group has the property that
if $x$ and $y$ are two elements for which the order of $x$ divides the order
of $y$, then $x$ is a power of $y$. It follows that the elements of $S$ all
have different orders, and these form an antichain in the lattice of divisors
of $|C|$.
\end{proof}

\begin{prop}
Let $G$ be the symmetric group $S_8$ on $8$ letters. Then $\Delta(G)$ is not
weakly perfect.
\end{prop}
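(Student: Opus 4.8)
The plan is to certify that $\Delta(S_8)$ is not weakly perfect by showing that its clique number equals $2$ while it contains an induced $5$-cycle, so that its chromatic number is at least $3$.

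For the clique number I would quote the previous proposition: the clique number of $\Delta(S_8)$ equals $\max\{\alpha(n):n\in\Omega(S_8)\}$. Running through the lcm's of the partitions of $8$ gives $\Omega(S_8)=\{1,2,3,4,5,6,7,8,10,12,15\}$, and of these the only integers that are not prime powers are $6,10,12,15$, each of which has largest divisor-antichain of size exactly $2$ (immediate from de Bruijn \emph{et al.}, or by direct inspection). Hence the clique number of $\Delta(S_8)$ is $2$; in particular the graph is triangle-free, a fact that will make the non-adjacency checks below partly automatic.

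Next I would exhibit five elements of $S_8$ inducing a $5$-cycle in $\Delta(S_8)$, for instance
\[v_1=(6\,7),\quad v_2=(1\,2\,8),\quad v_3=(3\,4),\quad v_4=(6\,7\,8),\quad v_5=(1\,2\,3\,4\,5),\]
taken in the cyclic order $v_1v_2v_3v_4v_5$. Recall that two elements are joined in $\Delta(G)$ precisely when they generate a cyclic group and neither is a power of the other. Each consecutive pair here consists of permutations with disjoint supports whose orders (which run $2,3,2,3,5$ around the cycle) are coprime with neither dividing the other, so each such pair generates a cyclic group (of order $6$, $6$, $6$, $15$, $10$ respectively) in which neither element is a power of the other: these are the five edges. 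For the five ``diagonal'' pairs, $\{v_1,v_3\}$ and $\{v_2,v_4\}$ each consist of two distinct elements of equal order that are plainly not powers of one another, hence cannot lie together in a cyclic group (which has a unique subgroup of each order); and $\{v_3,v_5\}$, $\{v_1,v_4\}$, $\{v_2,v_5\}$ are each pairs of non-commuting permutations, hence generate non-abelian, non-cyclic subgroups. So none of the five diagonals is an edge, and $\{v_1,\dots,v_5\}$ induces a $5$-cycle. Since a $5$-cycle has chromatic number $3$, the chromatic number of $\Delta(S_8)$ is at least $3$, strictly larger than the clique number $2$; thus $\Delta(S_8)$ is not weakly perfect.

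The only genuine difficulty is locating such a quintuple, since ten adjacency and non-adjacency conditions must hold simultaneously. I would organise the search by placing the $5$-cycle $v_5$ first: the requirement ``commutes with a $5$-cycle $C$'' forces an element of small order to have support disjoint from that of $C$ or to be a power of $C$, which quickly narrows the candidates for $v_4$, $v_2$, $v_1$; the three ``must not commute'' conditions and the two ``equal order'' conditions then tend to hold automatically (an involution or $3$-cycle supported inside the support of a longer cycle cannot commute with it, and disjoint cycles of equal length are never powers of each other). Once a valid configuration is in hand, verifying the ten conditions is routine and the conclusion is immediate.
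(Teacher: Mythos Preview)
Your proof is correct and follows essentially the same line as the paper's: compute $\Omega(S_8)$ to see the clique number of $\Delta(S_8)$ is $2$, then exhibit an induced $5$-cycle to force chromatic number at least $3$. The paper uses the quintuple $(1\,2),(3\,4\,5),(6\,7),(1\,2\,3),(4\,5\,6\,7\,8)$, which has the same $2,3,2,3,5$ order pattern as your $v_1,\dots,v_5$; your version simply carries out the edge/non-edge verification in more detail than the paper does.
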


\begin{proof}
We have $\Omega(G)=\{1,2,3,4,5,6,7,8,10,12,15\}$; so the clique number of
$\Delta(G)$ is equal to $2$. But $\Delta(G)$ is not bipartite, since
\[\{(1,2), (3,4,5), (6,7), (1,2,3), (4,5,6,7,8)\}\]
induces a $5$-cycle.
\end{proof}

It is an interesting problem to describe the groups $G$ for which $\Delta(G)$
is weakly perfect, but we shall not discuss this here.

\end{document}